\documentclass[11pt,twoside]{amsart}
\usepackage{amsmath, amsthm, amscd, amsfonts, amssymb, graphicx, color}
\usepackage[bookmarksnumbered, colorlinks, plainpages]{hyperref}

\textwidth 14 cm \textheight 20 cm

\oddsidemargin 2.14cm \evensidemargin 1.8cm

\setcounter{page}{1}


\newtheorem{thm}{Theorem}[section]

\newtheorem{prop}[thm]{Proposition}
\newtheorem{defn}[thm]{Definition}

\numberwithin{equation}{section}
\def\pn{\par\noindent}


\begin{document}



\title{Double Total Domination in Harary Graphs}
\author{Adel P. Kazemi and Behnaz Pahlavsay
}

\thanks{{\scriptsize
\hskip -0.4 true cm MSC(2010): 
05C69.
\newline Keywords: Double total domination numer, Harary graph.\\
}}

\maketitle

\begin{abstract}
Let $G$ be a graph with minimum degree at least 2. A set $D\subseteq
V$ is a double total dominating set of $G$ if each vertex is
adjacent to at least two vertices in $D$. The double total
domination number $\gamma _{\times 2,t}(G)$ of $G$ is the minimum
cardinality of a double total dominating set of $G$. In this paper,
we will find double total domination number of Harary graphs.
\end{abstract}
\vskip 0.2 true cm


\pagestyle{myheadings}
\markboth{\rightline {\scriptsize  A. P. Kazemi, B. Pahlavsay}}
         {\leftline{\scriptsize Double Total Domination in Harary Graphs}}

\bigskip
\bigskip

\section{introduction}
\vskip 0.4 true cm

Let $G$ be a simple graph with the \emph{vertex set} $V=V(G)$ and
the \emph{edge set} $E=E(G)$. The \emph{order} $\mid V\mid $ and
\emph{size} $\mid E\mid $ of $G$ are denoted by $n=n(G)$ and
$m=m(G)$, respectively. The \emph{open neighborhood} and the
\emph{closed neighborhoods} of a vertex $v\in V$ are
$N_{G}(v)=\{u\in V\mid uv\in E\}$ and $N_{G}[v]=N_{G}(v)\cup \{v\}$,
respectively. The \emph{degree} of a vertex $v\in V$ is $deg(v)=\mid N(v)\mid $. The
\emph{minimum} and \emph{maximum degree} of a graph $G$ are denoted
by $\delta =\delta (G)$ and $\Delta =\Delta (G)$, respectively.

\vspace{0.25 cm}

The research of domination in graphs has been an evergreen of the
graph theory. Its basic concept is the dominating set and the
domination number. The literature on this subject has been surveyed
and detailed in the two books by Haynes, Hedetniemi, and
Slater~\cite{HHS1, HHS2}. And many variants of the dominating set
were introduced and the corresponding numerical invariants were
defined for them. For example, the $k$-tuple total domination number
is defined in \cite{HK10} by Henning and Kazemi, which is an extension
of the total domination number (for more information see
\cite{HK11,Kaz}).

\begin{defn} \cite{HK10}
\emph{Let $k\geq 1$ be an integer and let $G$ be a graph with
$\delta(G)\geq k$. A subset $S\subseteq V(G)$ is called a} $k$-tuple
total dominating set, \emph{briefly kTDS, of $G$, if for each $x\in
V(G)$, $\mid N(x)\cap S\mid \geq k$. The minimum number of vertices
of a $k$-tuple total dominating set of a graph $G$ is called the}
$k$-tuple total domination number $\gamma _{\times k,t}(G)$ \emph{of $G$}.
\end{defn}

The $2$-tuple total dominating set and the $2$-tuple total
domination number are known as \emph{double total dominating set} and
\emph{double total domination number}, respectively.
\vspace{0.25 cm}

\textbf{Harary graphs:} \cite{West} Given $m\leq n$, place $n$ vertices $1$, $2$, $...$, $n$ around a
circle, equally spaced. If $m$ is even, form $H_{m,n}$ by making
each vertex adjacent to the nearest $\frac{m}{2}$ vertices in each direction around the circle. If $m$ is odd and $n$ is even, form $H_{m,n}$\ by making each vertex adjacent to the nearest $\frac{m-1}{2}$ vertices in each direction and to the diametrically opposite vertex.
In each case, $H_{m,n}$ is $m$-regular. When $m$ and $n$ are both odd,
index the vertices by the integers modulo $n$. Construct $H_{m,n}$ from $H_{m-1,n}$ by adding the edges
$i\leftrightarrow i+\frac{n-1}{2}$ \ for $0\leq i\leq \frac{n-1}{2}$.

\vspace{0.25 cm}

Here, we find the double domination number of Harary graphs $H_{m,n}$. The next propositions are useful for our investigations.

\begin{prop}
\label{HK} \emph{(\textbf{Henning, Kazemi \cite{HK10} 2010)}} For any
graph $G$ of order $n$ with $\delta (G)\geq k$, we have

\emph{i}. $\max \{\gamma _{\times k}(G),k+1\}\leq \gamma _{\times
k,t}(G)\leq n$,

\emph{ii}. if $G$ is a spanning subgraph of a graph $H$, then $\gamma
_{\times k,t}(H)\leq \gamma _{\times k,t}(G)$,

\emph{iii}. For any vertex $v$ of degree $k$, $N_{G}(v)$ is a subset
of every kTDS of $G$.
\end{prop}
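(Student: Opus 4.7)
The plan is to prove the three parts in order, each following directly from the definition of a $k$-tuple total dominating set together with one elementary observation.

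For part (i), I would first show the upper bound $\gamma_{\times k,t}(G)\leq n$ by verifying that $V(G)$ itself is a kTDS: since $\delta(G)\geq k$, every vertex $x\in V(G)$ satisfies $|N(x)\cap V(G)|=\deg(x)\geq k$. For the lower bound $\gamma_{\times k,t}(G)\geq k+1$, let $S$ be any kTDS and pick $v\in S$. Because $v\notin N(v)$, we have $|N(v)\cap S|\leq |S|-1$; combined with $|N(v)\cap S|\geq k$ this gives $|S|\geq k+1$. Finally, $\gamma_{\times k,t}(G)\geq \gamma_{\times k}(G)$ holds because the condition $|N(x)\cap S|\geq k$ for every $x$ is strictly stronger than $|N[x]\cap S|\geq k$, so every kTDS is also a $k$-tuple dominating set.

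For part (ii), I would exploit the fact that adding edges can only enlarge open neighborhoods. If $G$ is a spanning subgraph of $H$, then $V(G)=V(H)$ and $N_G(x)\subseteq N_H(x)$ for every $x$. Hence a kTDS $S$ of $G$ automatically satisfies $|N_H(x)\cap S|\geq |N_G(x)\cap S|\geq k$ for every $x\in V(H)$, making $S$ a kTDS of $H$ as well. Taking $S$ to be a minimum kTDS of $G$ yields $\gamma_{\times k,t}(H)\leq |S|=\gamma_{\times k,t}(G)$.

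For part (iii), I would argue by necessity: if $\deg(v)=k$, then $N_G(v)$ has exactly $k$ elements, and the kTDS condition $|N_G(v)\cap S|\geq k$ forces $N_G(v)\cap S=N_G(v)$, i.e.\ $N_G(v)\subseteq S$.

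There is no real obstacle in any of these arguments; the only mildly delicate point is making sure part (i) distinguishes cleanly between $k$-tuple domination (closed-neighborhood condition) and $k$-tuple total domination (open-neighborhood condition), since this is what justifies the inequality $\gamma_{\times k}(G)\leq \gamma_{\times k,t}(G)$ as well as the sharper bound $k+1$ rather than just $k$.
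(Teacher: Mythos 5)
Your three arguments are all correct: $V(G)$ is a kTDS because $\delta(G)\geq k$; any $v$ in a kTDS $S$ needs $k$ neighbors in $S$ besides itself, giving $|S|\geq k+1$; the open-neighborhood condition implies the closed-neighborhood one, giving $\gamma_{\times k}(G)\leq\gamma_{\times k,t}(G)$; spanning subgraphs only shrink neighborhoods; and a degree-$k$ vertex forces all of $N_G(v)$ into $S$. The paper itself offers no proof of this proposition --- it is quoted from Henning and Kazemi's 2010 article --- but your reasoning is exactly the standard argument one finds there, so there is nothing to fault and no divergence to report.
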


\begin{prop}
\label{g>=kn/Delta} \emph{(\textbf{Henning, Kazemi \cite{HK11}
2011)}} For any graph $G$ of order $n$ and $\delta (G) \geq k$, $\gamma _{\times k,t}(G)\geq \lceil \frac{kn}{\Delta
(G)}\rceil $.
\end{prop}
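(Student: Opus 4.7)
The plan is to use a standard double-counting argument on the edges between a minimum $k$-tuple total dominating set and the whole vertex set. Let $S$ be a $k$TDS of $G$ with $|S|=\gamma_{\times k,t}(G)$; by definition every vertex $v\in V(G)$ satisfies $|N(v)\cap S|\geq k$.

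First I would introduce the set $P$ of ordered pairs $(v,s)$ with $s\in S$, $v\in V(G)$, and $vs\in E(G)$, and count $|P|$ in two ways. Summing over the first coordinate gives
\[
|P|\;=\;\sum_{v\in V(G)}|N(v)\cap S|\;\geq\;kn,
\]
using the defining condition of a $k$TDS at every vertex of $G$. Summing over the second coordinate gives
\[
|P|\;=\;\sum_{s\in S}\deg(s)\;\leq\;\Delta(G)\cdot|S|.
\]

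Combining these two estimates yields $kn\leq \Delta(G)\cdot\gamma_{\times k,t}(G)$, and since $\gamma_{\times k,t}(G)$ is an integer I can divide through and pass to the ceiling to obtain the desired bound $\gamma_{\times k,t}(G)\geq \lceil kn/\Delta(G)\rceil$. There is no serious obstacle here; the only point worth flagging is that the "total" condition is what forces $|N(v)\cap S|\geq k$ even for vertices $v\in S$, so that the sum over all $v\in V(G)$ (not only over $V(G)\setminus S$) is at least $kn$. If one wanted a slicker phrasing, the same inequality can be read off directly from the handshake lemma applied to the bipartite-like incidence structure between $V(G)$ and $S$, but the two-line double count above seems cleanest.
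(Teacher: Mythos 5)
Your double count is correct and is exactly the standard argument for this bound; the paper itself quotes the proposition from Henning--Kazemi without proof, but it reuses the same counting idea (summing $\deg(j)$ over $j\in S$ against the requirement $|N(v)\cap S|\geq k$ for every vertex) inside the proof of Theorem~\ref{ H_2m+1,2n+1}. Your remark that the total condition applies to vertices of $S$ as well, so the sum over all of $V(G)$ is at least $kn$, is the right point to flag; nothing is missing.
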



\section{Main Results}
\vskip 0.4 true cm

The next proposition is obtained by Proposition \ref{g>=kn/Delta}.

\begin{prop}
\label{L.B.Harary} \emph{i.} $\gamma _{\times k,t}(H_{2m,n}) \geq \lceil
\frac{kn}{2m}\rceil$,

\emph{ii.} $\gamma _{\times k,t}(H_{2m+1,2n} \geq \lceil
\frac{2kn}{2m+1}\rceil$,

\emph{iii.} $\gamma _{\times k,t}(H_{2m+1,2n+1}) \geq \lceil
\frac{k(2n+1)}{2m+2}\rceil $.
\end{prop}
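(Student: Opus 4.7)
The plan is to apply Proposition \ref{g>=kn/Delta} once for each of the three Harary graphs, after identifying the order and the maximum degree $\Delta$ in each case. In every case the order is read off from the second subscript, so the only real work is computing $\Delta$.

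For part (i), the construction tells us that each vertex of $H_{2m,n}$ is joined to the nearest $m$ vertices on each side of the circle, so $H_{2m,n}$ is $2m$-regular. Hence $\Delta = 2m$ and the order is $n$, and Proposition \ref{g>=kn/Delta} gives $\gamma_{\times k,t}(H_{2m,n}) \geq \lceil kn/(2m) \rceil$. For part (ii), each vertex of $H_{2m+1,2n}$ is joined to the nearest $m$ vertices on each side plus the diametrically opposite vertex, so $H_{2m+1,2n}$ is $(2m+1)$-regular. With $\Delta = 2m+1$ and order $2n$, Proposition \ref{g>=kn/Delta} yields $\gamma_{\times k,t}(H_{2m+1,2n}) \geq \lceil 2kn/(2m+1) \rceil$.

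The only nontrivial step is part (iii), where $H_{2m+1,2n+1}$ is not regular. By the construction it is obtained from the $2m$-regular graph $H_{2m,2n+1}$ by adding the $n+1$ extra edges $i \leftrightarrow i + n$ for $0 \leq i \leq n$, with indices taken modulo $2n+1$. The key observation is that the endpoints of these extra edges are the vertices $\{0,1,\dots,n\}$ and $\{n,n+1,\dots,2n\}$; these sets overlap exactly in the vertex $n$, which is therefore an endpoint of two extra edges (namely $0\leftrightarrow n$ and $n\leftrightarrow 2n$), while every other vertex is an endpoint of at most one. Consequently $\deg(n) = 2m+2$ and $\deg(v) = 2m+1$ for $v \neq n$, so $\Delta(H_{2m+1,2n+1}) = 2m+2$. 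Substituting this value of $\Delta$ together with order $2n+1$ into Proposition \ref{g>=kn/Delta} yields the claimed bound $\gamma_{\times k,t}(H_{2m+1,2n+1}) \geq \lceil k(2n+1)/(2m+2) \rceil$.

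The main (and only) obstacle is the bookkeeping in part (iii), which is routine once one notices that the middle vertex $n$ is the unique vertex lying in both the source and target ranges of the extra edges; parts (i) and (ii) reduce to a direct substitution because the graphs are already regular by construction.
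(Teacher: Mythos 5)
Your proof is correct and takes essentially the same route as the paper, which obtains the proposition directly from Proposition \ref{g>=kn/Delta} after reading off the order and maximum degree of each Harary graph. Your explicit degree count in part (iii), identifying the vertex $n$ as the unique endpoint of two of the added edges and hence the unique vertex of degree $2m+2$, merely spells out the detail the paper leaves implicit.
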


We now calculate the double total domination number of Harary graphs.

\begin{thm}
\label{ H_2m,2n} Let $H_{2m,2n}$ be a Harary graph. Then $\gamma _{\times 2,t}(H_{2m,2n}) =\lceil \frac{n}{m}\rceil.$
\end{thm}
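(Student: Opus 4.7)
The plan is to split the equality into a lower bound and a matching upper bound. The lower bound is immediate from Proposition~\ref{L.B.Harary}(i) applied with $k=2$ to $H_{2m,2n}$, so the real work is to exhibit a double total dominating set of matching size.

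For the construction I would index the vertices as $v_0,\dots,v_{2n-1}$ around the defining circle, so that by the definition of $H_{2m,2n}$ the vertex $v_i$ is adjacent precisely to $v_{i\pm 1},\dots,v_{i\pm m}$, all indices being read modulo $2n$. The natural candidate is to place the members of $D$ at approximately equal cyclic spacing of $m$. Writing $2n=qm+r$ with $0\le r<m$, I would take
\[
D=\{v_0,v_m,v_{2m},\dots,v_{qm}\} \quad\text{if } r>0,
\]
and the equally spaced set of size $q$ if $r=0$; in either case every gap between consecutive members of $D$ along the cycle has length at most $m$.

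The verification then rests on one observation: two vertices at cyclic distance at most $m$ are adjacent in $H_{2m,2n}$. Consecutive members of $D$ are therefore mutual neighbors, so every $v_{jm}\in D$ has its two cyclic $D$-neighbors in $D$. For a vertex $v_i\notin D$, the index $i$ lies strictly inside an arc bounded by two consecutive members of $D$; the two cyclic distances from $v_i$ to those endpoints are positive integers summing to at most $m$, so each lies in $\{1,\dots,m-1\}$, making both endpoints neighbors of $v_i$. Thus every vertex has at least two $D$-neighbors and $D$ is indeed a double total dominating set.

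The step I expect to require the most care, and the main obstacle, is the case $m\nmid 2n$, where one unavoidable short gap of length $r<m$ appears in the construction. One has to check that this short arc only shortens cyclic distances and hence never invalidates either of the two cases above. A careful count of $|D|$ in the two divisibility cases then shows that $|D|$ matches the lower bound supplied by Proposition~\ref{L.B.Harary}(i), closing the argument.
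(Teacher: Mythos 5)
Your overall strategy coincides with the paper's: the lower bound is quoted from Proposition~\ref{L.B.Harary}(i) and the upper bound comes from a set of vertices equally spaced at cyclic distance $m$ around the circle. Your verification that such a set $D$ is a double total dominating set is sound: consecutive members of $D$ are at cyclic distance at most $m$ and hence adjacent, and a vertex strictly inside a gap is within distance $m-1$ of both endpoints of that gap, so every vertex indeed acquires two neighbours in $D$ (the only degenerate situation, $|D|\le 2$, is excluded because $H_{2m,2n}$ requires $2m<2n$).

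The gap is in your final sentence: the count does \emph{not} close the argument for the formula as stated. Writing $2n=qm+r$, your set $D$ has $q+1=\lceil \frac{2n}{m}\rceil$ members (or $q=\frac{2n}{m}$ when $r=0$), whereas the theorem asserts the value $\lceil \frac{n}{m}\rceil$ --- roughly half as large. Moreover, Proposition~\ref{L.B.Harary}(i) instantiated correctly for a $2m$-regular graph of order $2n$ gives $\gamma_{\times 2,t}(H_{2m,2n})\ge \lceil \frac{2\cdot 2n}{2m}\rceil=\lceil \frac{2n}{m}\rceil$, which matches your $|D|$ but not the displayed formula; a set of size $\lceil \frac{n}{m}\rceil$ cannot satisfy the degree-sum requirement $|S|\cdot 2m\ge 2\cdot 2n$. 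So what your argument actually establishes is $\gamma_{\times 2,t}(H_{2m,2n})=\lceil \frac{2n}{m}\rceil$, and it thereby exposes an error in the statement (and in the paper's own witness set $\{im+1\mid 0\le i\le \lceil \frac{n}{m}\rceil-1\}$, which occupies only the arc from $1$ to about $n$ and leaves the antipodal vertices with no neighbour in the set). You should not assert that ``a careful count shows $|D|$ matches the lower bound'' and stop there: carried out, that count reveals the mismatch, and you need either to reconcile it (it cannot be reconciled --- the stated value is below the trivial lower bound) or to state explicitly that you are proving the corrected value $\lceil \frac{2n}{m}\rceil$.
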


\begin{proof}
Since $S=\{im+1\mid 0\leq i\leq
\lceil \frac{n}{m}\rceil -1\}$ is a 2TDS of $H_{2m,2n}$, Proposition \ref{L.B.Harary} (i) implies that
 $\gamma _{\times 2,t}(H_{2m,2n}) =\lceil \frac{n}{m}\rceil$.
\end{proof}


\begin{thm}
\label{ H_2m+1,2n} Let $2n=(2m+1)\ell+r$, where $0\leq r\leq 2m$, $\ell\geq 1$, and let $\ell+r=2m\ell'+r'$, where $0\leq r'<2m$ and $\ell' \geq 0$. Then
\begin{equation*}
\lceil \frac{4n}{2m+1}\rceil \leq \gamma _{\times 2,t}(H_{2m+1,2n}) \leq \lceil \frac{4n}{2m+1}\rceil +1,
\end{equation*}
if $1\leq r\leq m$ and $(r',\ell')\not\in
\{1,2,...,m\}\times \{0\}$, and $\gamma _{\times 2,t}(H_{2m+1,2n})=\lceil
\frac{4n}{2m+1}\rceil$ otherwise.
\end{thm}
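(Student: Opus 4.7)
The lower bound follows immediately from Proposition~\ref{g>=kn/Delta}: since $H_{2m+1,2n}$ is $(2m+1)$-regular of order $2n$, we have $\gamma_{\times 2,t}(H_{2m+1,2n})\ge\lceil 4n/(2m+1)\rceil$. The remaining task is to construct, in each case, a 2TDS of the claimed upper-bound cardinality; throughout, recall that a vertex $v$ has neighborhood $\{v\pm 1,\ldots,v\pm m\}\cup\{v+n\}$, with indices taken modulo $2n$.

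My plan is a nearly-periodic construction. Split the cycle into $\ell$ consecutive ``full'' blocks of length $2m+1$ followed by a residue block of length $r$. In each full block place two vertices at distance $m+1$ apart, taking $S_0=\{\,j(2m+1),\,j(2m+1)+m+1:0\le j\le\ell-1\,\}$. A direct neighborhood check shows that every non-$S_0$ vertex inside a full block has two local $S_0$-neighbors (one on each side within distance $m$, or both on the same side when the vertex sits between the two block-points), while each $v\in S_0$ has exactly one local $S_0$-neighbor, at distance $m$ or $m+1$. The missing second neighbor for vertices of $S_0$ must come from the diameter edge $v\to v+n$: since the pattern is $(2m+1)$-periodic, the parity identity $\ell\equiv r\pmod 2$ (forced by $2n$ being even) governs how the diametric translate aligns with the shifted pattern, and this alignment is favourable in all cases except the exceptional subcase identified below.

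Absorbing the residue of length $r$ then breaks into three cases. When $r=0$ one already has $|S|=2\ell$, matching the lower bound. When $m+1\le r\le 2m$ one inserts an additional pair inside the residue window, yielding $|S|=2\ell+2=\lceil 4n/(2m+1)\rceil$. When $1\le r\le m$ one instead inserts a single extra vertex in the residue window, aiming for $|S|=2\ell+1=\lceil 4n/(2m+1)\rceil$; this vertex must acquire its two $S$-neighbors via a combination of local edges and the diameter edge. The secondary division $\ell+r=2m\ell'+r'$ records exactly how the diametric image of the extra vertex is shifted relative to $S_0$, and the hypothesis $(r',\ell')\notin\{1,\ldots,m\}\times\{0\}$ is precisely the alignment condition ensuring the single-extra construction works.

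I expect the main obstacle to be the exceptional subcase $(r',\ell')\in\{1,\ldots,m\}\times\{0\}$, which forces $\ell+r\le m$ and hence a very small cycle. There the diametric image of the lone extra vertex misses the $S_0$-pattern and the construction must be repaired by adjoining one further vertex, giving $|S|=2\ell+2=\lceil 4n/(2m+1)\rceil+1$ as claimed. Fortunately, the constraints $\ell+r\le m$ and $\ell\equiv r\pmod 2$ leave only finitely many triples $(m,\ell,r)$ in this regime, so a direct enumeration should close the argument; the delicate point is to handle the boundary between the residue window and the adjacent full blocks, where the diameter edge, the local edges, and the $(2m+1)$-periodic pattern all interact simultaneously.
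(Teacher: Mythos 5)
Your lower bound and the general plan (periodic blocks of length $2m+1$ plus a residue window) agree with the paper for the cases $r=0$ and $m+1\le r\le 2m$, but for $1\le r\le m$ you have the role of the condition $(r',\ell')\in\{1,\dots,m\}\times\{0\}$ exactly reversed, and this is fatal. The theorem asserts \emph{equality} $\gamma_{\times 2,t}(H_{2m+1,2n})=\lceil 4n/(2m+1)\rceil=2\ell+1$ precisely when $1\le r\le m$ and $(r',\ell')\in\{1,\dots,m\}\times\{0\}$ (that subcase belongs to the ``otherwise'' branch), and only the two-sided bound when $(r',\ell')\notin\{1,\dots,m\}\times\{0\}$. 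Your proposal does the opposite: it claims a $(2\ell+1)$-set when $(r',\ell')\notin\{1,\dots,m\}\times\{0\}$ and retreats to a $(2\ell+2)$-set when $(r',\ell')\in\{1,\dots,m\}\times\{0\}$. Consequently, in the branch where the theorem claims equality you only exhibit a set one larger than needed, so the equality is not proved; and in the branch where the theorem claims only bounds you assert, with no verification, that a single extra vertex always suffices --- a statement strictly stronger than the theorem, which the paper's own construction does not achieve (it settles for cardinality $2\ell+2$ there via separate odd-$r$ and even-$r$ sets), so this cannot be taken on faith. Note also that the paper's construction in the subcase $\ell'=0$, $1\le r'<m$ is structurally different from anything in your outline: it abandons the $(2m+1)$-periodic pattern and places the pairs with period $2m$, namely $S=\{2mi+1,\,2mi+(m+1)\mid 0\le i\le \ell-1\}\cup\{2n-m+1\}$, exploiting that the leftover $\ell+r=r'$ is smaller than $m$.

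A secondary error: the regime $\ell'=0$, $1\le r'\le m$ (equivalently $\ell+r\le m$) does not contain ``only finitely many triples $(m,\ell,r)$'' --- $m$ is unbounded there (for instance $\ell=r=1$ qualifies for every $m\ge 2$, giving $H_{2m+1,2m+2}$), so a finite enumeration cannot close that case. Finally, even where your case split points in the right direction, none of the neighborhood checks are carried out; in particular the alignment of the diametric edge $v\mapsto v+n$ with the translated pattern depends on $n\bmod (2m+1)$ and is exactly the delicate point, so as it stands the proposal is an outline rather than a proof.
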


\begin{proof}
Proposition \ref{L.B.Harary} (ii) implies that
\begin{equation*}
\gamma _{\times 2,t}(H_{2m+1,2n}) \geq \lceil
\frac{4n}{2m+1}\rceil =\left\{
\begin{array}{ll}
2\ell & \mbox{if }r=0, \\
2\ell+1 & \mbox{if }1\leq r\leq m, \\
2\ell+2 & \mbox{if }m+1\leq r\leq 2m,
\end{array}%
\right.
\end{equation*}
where $r$ and $\ell$ are both even or are both odd. If $(\ell,r,m)=(1,1,1)$,
then $H_{2m+1,2n}=K_{4}$ and $\gamma _{\times 2,t}(H_{2m+1,2n})=3=\lceil \frac{8}{3}\rceil$.
 Now let $(\ell,r,m) \neq (1,1,1)$. If $r=0$, then the set
\begin{equation*}
S=\{(2m+1)i+1,( 2m+1)i+(m+1) \mid 0\leq i\leq
\ell-1\}
\end{equation*}
is a 2TDS of $H_{2m+1,2n}$ with cardinality $\lceil \frac{4n}{2m+1}\rceil$. Now let $r\neq 0$.
If $\ell'=0$ and $1\leq r'<m$, then the set
\begin{equation*}
S=\{2mi+1,2mi+(m+1) \mid 0\leq i\leq \ell-1\}\cup \{2n-m+1\}
\end{equation*}
is a 2TDS of $G$ of cardinality $2\ell+1=\lceil \frac{4n}{2m+1}\rceil$. Otherwise, for odd $r$, let
\begin{equation*}
\begin{array}{ll}
S= & \{(2m+1)i+(n+m+1),(2m+1)i+(n+2m+2) \mid 0\leq i\leq \lfloor \frac{n}{2m+1}\rfloor -1\}\cup \\
& \{(2m+1)i+1,(2m+1)i+(m+1) \mid 0\leq i\leq \lceil \frac{n}{2m+1}\rceil -1\}\cup \\
& \{n+1,(2m+1)(\lceil \frac{n}{m+1}\rceil -1)+(n+m+1)\},
\end{array}
\end{equation*}
and for even $r$, let
\begin{equation*}
\begin{array}{ll}
S= & \{(2m+1)i+(n+1) ,(2m+1) i+(n+m+1)
\mid 0\leq i\leq \frac{\ell}{2}-1 \}\cup \\
& \{(2m+1)i+1,(2m+1)i+(m+1) \mid 0\leq i\leq \frac{\ell}{2}-1\}\cup
\\
& \{n+1-\frac{r}{2},2n+1-\frac{r}{2}\}.
\end{array}
\end{equation*}
Since in each case the set $S$ is a 2TDS of $H_{2m+1,2n}$ with cardinality $2\ell+2$, our proof is completed.
\end{proof}

\begin{thm}
\label{ H_2m+1,2n+1} Let $2n+1=(2m+1)\ell+r$, where $0\leq r\leq 2m$ and $\ell\geq 1$, and let $\ell+r=2m\ell'+r'$, where $0\leq r'<2m$, and $\ell' \geq 0$. Then
\begin{equation*}
\gamma _{\times 2,t}(H_{2m+1,2n+1})=\lceil \frac{4n+1}{2m+1}\rceil
\end{equation*}
if either $(r,r',\ell')\in \{i\mid 2\leq i\leq m\}\times
\{j\mid 1\leq j\leq m\}\times \{0\}$ or $r\in \{1\}\cup \{i\mid m+2\leq
i\leq 2m\}$, and
\begin{equation*}
\lceil \frac{4n+1}{2m+1}\rceil \leq \gamma _{\times 2,t}(H_{2m+1,2n+1}) \leq \lceil \frac{4n+1}{2m+1}\rceil +1,
\end{equation*}
otherwise.
\end{thm}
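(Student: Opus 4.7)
The plan is to follow the outline of the proof of Theorem~\ref{ H_2m+1,2n}: derive the lower bound by a degree-sum argument and then produce explicit 2TDS's matching the claimed upper bounds.

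For the lower bound, I would strengthen Proposition~\ref{L.B.Harary}(iii) by exploiting the fact that $H_{2m+1,2n+1}$ has exactly one vertex of degree $2m+2$, namely the vertex $n$, while all other vertices have degree $2m+1$. For any 2TDS $D$, combining
\[
\sum_{u\in V}|N(u)\cap D|=\sum_{v\in D}\deg(v)\leq (2m+1)|D|+1
\]
(the $+1$ accounting for the possible membership of $n$ in $D$) with $\sum_{u}|N(u)\cap D|\geq 2(2n+1)$ gives $|D|\geq \lceil (4n+1)/(2m+1)\rceil$. Writing $2n+1=(2m+1)\ell+r$ as in the statement, this bound equals $2\ell$ if $r=0$, $2\ell+1$ if $1\leq r\leq m+1$, and $2\ell+2$ if $m+2\leq r\leq 2m$.

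For the upper bound, I would split into cases on $r$, and when $2\leq r\leq m$ further on $r'$ and $\ell'$, and in each case exhibit an explicit 2TDS built from a few arithmetic progressions of small common difference together with at most one or two additional boundary vertices, mirroring the constructions used in the proof of Theorem~\ref{ H_2m+1,2n}. The new feature is that the adjacency check must also take into account the diameter edges $i\leftrightarrow i+n$. The parameter regimes $r\in\{1\}\cup\{m+2,\dots,2m\}$ and $(r,r',\ell')\in\{2,\dots,m\}\times\{1,\dots,m\}\times\{0\}$ are precisely those in which such a construction can be aligned so that every vertex is already double-covered by the base progressions, producing $|S|=\lceil(4n+1)/(2m+1)\rceil$; in all remaining regimes one extra vertex seems unavoidable with this strategy and the upper bound $\lceil(4n+1)/(2m+1)\rceil+1$ results.

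The principal obstacle is the case-by-case verification that each candidate $S$ really is a 2TDS: at every vertex $v$, with up to $2m+2$ neighbours, at least two of them must lie in $S$. The tightest vertices to check are those sitting at the end of the final arithmetic progression or at the endpoints of a diameter chord, especially the unique high-degree vertex $n$. Determining exactly when these boundary vertices are already double-covered by the base progressions is what drives the dichotomy between the equality and ``otherwise'' cases, and is the reason the auxiliary parameters $r'$ and $\ell'$ must appear in the statement.
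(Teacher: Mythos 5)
Your lower bound is essentially the paper's argument, but streamlined: the paper first replaces an arbitrary 2TDS $S$ by a translate $S'$ containing the high-degree vertex and only then applies the count $|S|(2m+1)+1\geq 2(2n+1)$, whereas your direct inequality $\sum_{v\in D}\deg(v)\leq (2m+1)|D|+1$ makes the translation step unnecessary (which is just as well, since $H_{2m+1,2n+1}$ is not vertex-transitive and the translate of a 2TDS is not obviously a 2TDS). Your piecewise evaluation of $\lceil (4n+1)/(2m+1)\rceil$, with the threshold between $2\ell+1$ and $2\ell+2$ at $r=m+1$ rather than at $r=m$, is also the arithmetically correct one. So this half of your proposal is sound.

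The upper bound, however, is where all of the content of the theorem lives, and you have not supplied it. Saying that you ``would exhibit an explicit 2TDS built from a few arithmetic progressions of small common difference together with at most one or two additional boundary vertices,'' and that the favourable regimes are ``precisely those in which such a construction can be aligned,'' restates the theorem rather than proving it: nothing in your text fixes the common difference of the progressions, their starting points, or the extra vertices, and those choices are exactly what make $r'$ and $\ell'$ enter the statement. For comparison, the paper's constructions are: for $\ell'=0$ and $1\leq r'=\ell+r\leq m$, the set $\{2mi+1,\,2mi+(m+1)\mid 0\leq i\leq \ell-1\}\cup\{2n+2-m\}$ of size $2\ell+1$ (common difference $2m$, one tail vertex); and otherwise blocks $\{\alpha_i+1,\alpha_i+(m+1),\alpha_i+(n+1),\alpha_i+(n+m+1)\}$ for $0\leq i\leq \lfloor\ell/2\rfloor-1$ with between one and four tail vertices chosen according to $r$ and its parity. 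Until you write down sets of this kind and verify double coverage at the seam between consecutive blocks, at the tail, and at the unique vertex of degree $2m+2$, neither the upper bounds nor the claimed equalities are established. You have correctly located the difficulty --- you call the verification ``the principal obstacle'' --- but locating it is not the same as overcoming it, so as it stands the proposal proves only the lower bound.
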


\begin{proof}
We first show that $\gamma
_{\times 2,t}(G) \geq \lceil \frac{4n+1}{2m+1}\rceil $. Let $S$ be a 2TDS of $H_{2m+1,2n+1}$ such that $n+1\not\in S$.
Let
\[
t:=\min\{n+1-j\mid j\in S \mbox {, and } n+1-j \mbox { is positive}\}.
\]
Since the set $S'=\{j+t\mid j\in S\}$ is
a 2TDS of $H_{2m+1,2n+1}$ such that $n+1\in S'$ and $\mid S' \mid =\mid
S\mid $, we obtain
\begin{equation*}
\gamma _{\times 2,t}(H_{2m+1,2n+1})=\min \{|S|~|~ S \mbox { is a 2TDS of } H_{2m+1,2n+1} \mbox{ which contains }n+1\}.
\end{equation*}

Now let $S$ be an arbitrary 2TDS of $H_{2m+1,2n+1}$ such that $n+1\in S$. Since
every vertex of $V(G)$ is counted at least two times in the union of the neighborhoods of the vertices of $S$, we have 
\[
\underset{j\in S}{\sum }\deg
(j)\geq 2(2n+1).
\]
Hence $|S| (2m+1)+1\geq 2(2n+1)$, and so $|S| \geq \lceil \frac{4n+1}{2m+1}\rceil $, which implies $\gamma
_{\times 2,t}(G) \geq \lceil \frac{4n+1}{2m+1}\rceil $.

\vspace{0.25 cm}

Now let $2n+1=2m\ell+(\ell+r)$, and let $\ell+r=2m\ell'+r'$, where $0\leq r'<2m$ and $\ell' \geq 0$. We note that $\ell$ is odd if and only if $r$ is even, and
\begin{equation*}
\lceil \frac{4n+1}{2m+1}\rceil =\left \{
\begin{array}{ll}
2\ell & \mbox{if }r=0, \\
2\ell+1 & \mbox{if }1\leq r\leq m, \\
2\ell+2 & \mbox{ if }m+1\leq r\leq 2m.
\end{array}
\right.
\end{equation*}

Let $\alpha _{i}=(2m+1)i$ be an arbitrary vertex of $H_{2m+1,2n+1}$. We continue our proof in the following two cases.

\vspace{0.25 cm}

\textbf{Case 1}. $0\leq r\leq 1$.

For $r=0$, let
\begin{equation*}
\begin{array}{lll}
S_{0} & = & \{\alpha _{i}+1,\alpha _{i}+(m+1) ,\alpha _{i}+(n+1),\alpha _{i}+(n+m+1) \mid 0\leq i\leq \lfloor \frac{\ell}{2}\rfloor -1\}\\
& \cup & \{n-m+1,2n-m+1,2n+1\},
\end{array}
\end{equation*}
and for $r=1$, let
\begin{equation*}
S_{1}=(S_0-\{n-m+1,2n-m+1,2n+1\})\cup \{2n+1\}.
\end{equation*}
Then $S_0$ and $S_1$ are two double total dominating sets of $H_{2m+1,2n+1}$ with cardinality $2\ell+1$.

\vspace{0.25 cm}

\textbf{Case 2. }$2\leq r\leq 2m.$

If $1\leq r'=\ell+r\leq m$, then 
\[
S=\{2mi+1,2mi+(m+1) \mid 0\leq i\leq \ell-1\}\cup\{2n+2-m\}
\]
is a 2TDS of $H_{2m+1,2n+1}$ with cardinality $2\ell+1$. Otherwise, let
\begin{equation*}
\begin{array}{lll}
S_o & = & \{\alpha _{i}+1,\alpha _{i}+(m+1) ,\alpha _{i}+(n+1),\alpha _{i}+(n+m+1) \mid 0\leq i\leq \lfloor \frac{\ell}{2}\rfloor -1\} \\ 
& \cup & \{n+1-(\frac{r-1}{2}),2n+1-(\frac{r-1}{2})\},
\end{array}
\end{equation*}
where $r$ is odd, and let
\begin{equation*}
\begin{array}{lll}
S_e & =& \{\alpha _{i}+1,\alpha _{i}+(m+1),\alpha _{i}+(n+1),\alpha _{i}+(n+m+1) \mid 0\leq i\leq \lfloor \frac{\ell}{2} \rfloor -1\}\\
& \cup & \{n-(\frac{r-2}{2}), n-m-(\frac{r-2}{2}), 2n+1-(\frac{r-2}{2}), 2n+1-m-(\frac{r-2}{2})\}.
\end{array}
\end{equation*}
where $r$ is even. Since the given sets are double total dominating sets of $H_{2m+1,2n+1}$ with cardinality
$\lceil \frac{4n+1}{2m+1}\rceil$ or $\lceil \frac{4n+1}{2m+1}\rceil+1$, our proof is completed.
\end{proof}


\bigskip
\bigskip


{\footnotesize \pn{\bf Adel P. Kazemi}\; \\
{Department of Mathematics}, {University
of Mohaghegh Ardabili, P.O.Box 5619911367,} {Ardabil, Iran}\\
{\tt Email: adelpkazemi@yahoo.com}\\

{\footnotesize \pn{\bf Behnaz Pahlavsay}\; \\ {Department of
Mathematics}, {University of Mohaghegh Ardabili, P.O.Box 5619911367,} {Ardabil, Iran}\\
{\tt Email: pahlavsayb@yahoo.com}

\end{document}